\newtheorem{thm}{Theorem}
\newtheorem{lm}{Lemma}
\begin{document}


\title{The Runge-Kutta Method in Geometric Multiplicative Calculus}


\author{Mustafa Riza}
\email{mustafa.riza@emu.edu.tr}
\address{Department of Physics, Eastern Mediterranean University, Gazimagusa, North Cyprus, via Mersin 10, Turkey}
\author{Hatice Akt\"ore}
\email{hatice.aktore@emu.edu.tr}
\address{Department of Mathematics, Eastern Mediterranean University, Gazimagusa, North Cyprus, via Mersin 10, Turkey}

\begin{abstract}
This paper illuminates the derivation, the applicability, and the efficiency of the Multiplicative Runge-Kutta Method, derived in the framework of geometric multiplicative calculus. The removal of the restrictions of geometric multiplicative calculus to positive-valued functions of real variable and the fact that the multiplicative derivative does not exist at the roots of the function, is presented explicitly to ensure that the proposed method is universally applicable. The error analysis  is also carried out in the framework of geometric multiplicative calculus explicitly. The presented method is applied to various problems and the results are compared to the ones obtained from the Ordinary Runge-Kutta Method. Moreover, for one example, a comparison of the computation time vs. relative error, is worked out, to  illustrate  the general advantage of the proposed method.

\end{abstract}

\keywords{Multiplicative Calculus, Complex Multiplicative Calculus, Runge-Kutta, Initial Value Problems, Differential Equations, Numerical Approximation}

\subjclass[2010]{65L06,34K28}

\maketitle
\section{Introduction}
\label{sec:intro}

The invention of Multiplicative Calculus can be dated back to 1972, when Micheal Grossman and Robert Katz finished their book on Non-Newtonian Calculi \cite{GK}, where they proposed nine different Non-Newtonian Calculi. Later Micheal Grossmann elaborated the bigeometric multiplicative calculus in \cite{GR}. Bigeometric multiplicative calculus was also proposed independently by Cordova-Lepe \cite{C1} under the name proportional calculus. Although, Volterra and Hostinsky proposed a kind of flavor of multiplicative calculus in \cite{V}, we can not date the invention of multiplicative calculus back to 1938. After 25 years of silence in this field Bashirov et al presented a mathematical precise description of the geometric multiplicative calculus in  \cite{BKO}.
This work initiated  numerous studies in the field of multiplicative calculus. Several multiplicative numerical approximation methods have been proposed  and discussed like \cite{ozyapici2007exponential,ozyapici2008notes,ROK,MG, ozyapici2013multiplicative,mrbe2014}. Moreover, multiplicative calculus has  found its way into biomedical image analysis \cite{florack12} and modelling with differential equations  \cite{BMTO}. Furthermore,  the Runge-Kutta method was developed in the framework of bigeometric calulus for applications in dynamic systems by Aniszewska et al. in \cite{A}. A more exact bigeometric Runge-Kutta method was proposed by Riza and Eminaga \cite{mrbe2014}, based on the bigeometric taylor  theorem derived in  \cite{mrbe2014}.

One drawback of  Multiplicative Calculus, generally put forward, is that multiplicative calculus can only be applied to positive-valued functions of real variable. This restriction can be circumvented by using complex multiplicative calculus. The first attempt was presented by Uzer in \cite{U}. A mathematically precise description of the complex geometric multiplicative calculus was given by Bashirov and Riza  in  \cite{BR,bashirov2011complex}. The fact, that the derivative is a local property suggests the extension to the complex domain. So, a simple change to complex-valued functions of real variable removes the restriction to deal with purely positive functions and allows us to treat the real and imaginary part independently as the multiplicative Cauchy-Riemann conditions become trivial in this case. 

In section \ref{sec:mrk}, the multiplicative Euler method and the 4th order multiplicative Runge-Kutta (MRK4) method for positive-valued functions of real variable will be elaborated, and the extension to complex-valued functions of real variable will be presented.  Another wellknown drawback of multiplicative calculus is the breakdown of the multiplicative derivative at the roots of the functions. Section \ref{sec:extension} covers also the the solution to that problem. The error analysis for the geometric multiplicative Runge-Kutta method presented in section \ref{sec:error}, carried out in analogy to the ordinary Runge-Kutta method like e.g. in \cite{stbul93}, shows that the error becomes considerably smaller for  the same step size compared to the ordinary Runge-Kutta method.  In section \ref{sec:ex},  the geometric multiplicative Runge-Kutta Method will be applied to a multiplicative initial value problem, not involving the exponential or logarithmic functions, with a known closed form solution. The results of the application of the multiplicative Runge-Kutta method are compared to the results of the ordinary Runge-Kutta method for a fixed step width $h$. Furthermore, the computation time versus the relative error with varying step width for this example are presented to show the superiority of the proposed method. Based on the Baranyi model for bacterial growth \cite{baranyi95,baranyi94} using differential equations, the multiplicative Runge-Kutta method was applied on the bacterial growth in food modelled by Huang \cite{huang12,huang10,huang08}, and compared to the results from the ordinary Runge-Kutta method. As an example for a coupled system of multiplicative initial value problems, a second order differential equation, with well-known closed form solution also used in \cite{ROK}, is used to compare the multiplicative Runge-Kutta method with the multiplicative finite difference method. All examples show the superiority of the multiplicative Runge-Kutta method, with respect to error as well as performance. 
Finally, all  findings will be summarised in the section \ref{sec:con}.

  In order to ease the reading of this paper, we will use multiplicative calculus synonymical to geometric multiplicative calculus.

\section{Multiplicative Runge Kutta Method for real-valued functions of real variable}
\label{sec:mrk}

In this section, the Multiplicative Runge-Kutta Method, also referred as MRK-method in the following, will be derived for the 2nd order case exemplarily explicitly. Only the starting equations and the results of the  fourth order MRK-method will be presented. 

The methods being derived in the following will be used to find suitable approximations to the solution of multiplicative initial value problems of the form:
\begin{equation}
y^*(x) = f(x,y),
\label{eq:mdeq}
\end{equation}
with the initial condition
\begin{equation}
y(x_0) =y_0.
\label{eq:miv}
\end{equation}

\subsection{2nd order MRK method}

The simplest approach to find an approximation to the solution of the differential equation \eqref{eq:mdeq} with the initial value \eqref{eq:miv} is the second order Runge-Kutta Method, also known as the Euler method. In analogy to the ordinary Euler method, we will derive in the following the second order Multiplicative Runge-Kutta  method or the Multiplicative Euler method by making the ansatz:

\begin{equation}
y(x+h)=y(x)\cdot f_0^{ah}\cdot f_1^{bh},
\label{eq:meu}
\end{equation}
where
\begin{eqnarray}
f_0&=&f(x,y), \text{ and } \label{eq:rkf0}\\
f_1&=&f(x+ph,y\cdot f_0^{qh}). \label{eq:rkf1}
\end{eqnarray}

The multiplicative taylor expansion of $y(x+h)$ up to order 2 is given as

\begin{equation}
y(x+h)=y(x)\cdot y^*(x)^h\cdot y^{**}(x)^{h^2/2}\cdot... \qquad .
\end{equation}

Remembering that, 

\begin{equation}
y^*(x)=f(x,y) \quad \text{and} \quad y^{**}(x)=f_x^*(x,y)\cdot f_y^*(x,y)^{y\ln f(x,y)}
\end{equation}

the multiplicative Taylor expansion of $y(x+h)$ becomes

\begin{equation}
y(x+h)=y(x)\cdot f(x,y)^h\cdot f^*_x(x,y)^{h^2/2}\cdot f^*_y(x,y)^{y\ln f(x,y)h^2/2},
\label{eq:mtaylor}
\end{equation}

where $f^*_x(x,y)$ denotes the multiplicative partial derivative with respect to $x$ and $f^*_y(x,y)$  with respect to $y$ respectively. 

In order to compare \eqref{eq:mtaylor} with \eqref{eq:meu} we need to expand also $f_1$ using the multiplicative taylor theorem up to order 1 as the power of the ansatz \eqref{eq:meu} also includes one $h$. Recalling that  $y$ is a function of $x$ the taylor expansion for $f_1$ becomes by the application of the multiplicative chain rule 
\begin{equation*}
f_1=f(x,y)\cdot f^*_x(x,y)^{ph}\cdot f^*_y(x,y)^{yqh\ln f_0}. 
\end{equation*}

With  $f_0=f(x,y)$,  the Taylor expansion of $f_1$ up to order 1 in $h$ becomes

\begin{equation}
f_1=f(x,y)\cdot f^*_x(x,y)^{ph}\cdot f^*_y(x,y)^{yqh\ln f(x,y)}.\label{eq:f1exp}
\end{equation}

Then, by substituting \eqref{eq:f1exp} and \eqref{eq:rkf0}  in \eqref{eq:meu}, we get the Multiplicative Runge-Kutta expansion for the comparison with the multiplicative Taylor expansion of \eqref{eq:mtaylor} as 

\begin{equation}
y(x+h)=y(x)\cdot f(x,y)^{(a+b)h}\cdot f_x(x,y)^{bph^2}\cdot f_y(x,y)^{y\ln f(x,y)bqh^2}.
\label{eq:meuexp}
\end{equation}

Comparison of the powers of $f(x,y)$ and its partial derivatives in  \eqref{eq:meuexp} with \eqref{eq:mtaylor} up to order 2 in $h$ gives:

\begin{eqnarray}
a+b&=&1, \label{eq:mr1}\\
bp&=&\frac{1}{2},\label{eq:mr2}\\
bq&=&\frac{1}{2}. \label{eq:mr3}.
\end{eqnarray}

Obviously, we have infinitely many solutions of the equations \eqref{eq:mr1}-\eqref{eq:mr3}, as the number of unknowns is greater than the number of equations. Furthermore, we can see that $p=q$ and $a+b=1$, which can be easily reflected in analogy to the regular Butcher Tableau \cite{butcher75} as the multiplicative Butcher Tableau.

One possible choice of the parameters $a,b,p,$ and $q$ is as following:
 \[
 a=\frac{1}{2}, b=\frac{1}{2}, p=1, \text{ and } q=1.
 \]  
 
 Here, we can see that we evaluate the function at the endpoints of the interval, and give equal weights to the contributions of $f_0$ and $f_1$, resulting in the multiplicative Euler method formulae 

\begin{eqnarray*}
y(x+h)&=&y(x)\cdot f_0^{\frac{h}{2}}\cdot f_1^{\frac{h}{2}},\\
f_0&=&f(x,y), \quad \text{and}\\
f_1&=&f(x+h,y\cdot f_0^{h}).
\end{eqnarray*}

Of course, depending on the problem the parameters can be also chosen differently to optimise the solutions, satisfying the equations \eqref{eq:mr1}-\eqref{eq:mr3}.

\subsection{4th order MRK method}

In practice, mainly the 4th order Runge-Kutta method is used. In analogy to the above described 2nd  order multiplicative Runge-Kutta method, we will now employ the 4th order multiplicative Runge-Kutta method. Consequently, we make the following ansatz

\begin{eqnarray*}
y(x+h)&=&y(x)\cdot f_0^{ah}\cdot f_1^{bh}\cdot f_2^{ch}\cdot f_3^{dh},\\
f_0&=&f(x,y),\\
f_1&=&f(x+ph,y\cdot f_0^{qh}),\\
f_2&=&f(x+p_1h,y\cdot f_0^{q_1h}\cdot f_1^{q_2h}),\\
f_3&=&f(x+p_2h,y\cdot f_0^{q_3h}\cdot f_1^{q_4h}\cdot f_2^{q_5h}).
\end{eqnarray*}

Again we need to find the Taylor expansions of $f_0 , f_1 , f_2$ and $f_3$ in order to be substituded into the 4th order multiplicative Runge-Kutta formula, and compare it with the Taylor expansion of $y(x+h)$  up to order 4. After a lengthy calculation, we get by comparison, the following set of equations 

\begin{eqnarray}
p&=&q, \label{m4p}\\
p_1&=& q_1+q_2, \label{m4p1} \\
p_2 &=& q_3+q_4+q_5, \label{m4p2}
\end{eqnarray}

and

\begin{eqnarray}
a+b+c+d&=&1, \label{m4e13}\\
bp+cp_1+dp_2&=&\frac{1}{2},  \label{m4e23}\\
bp^2+cp_1^2+dp_2^2&=&\frac{1}{3}.  \label{m4e33}
\end{eqnarray} 

Resulting in the multiplicative Butcher Tableau
\[
\begin{array}{c|cccc}
0& & & &\\
p& q& & &\\
p_1& q_1& q_2& &\\
p_2& q_3& q_4& q_5&\\
\hline
& a & b& c & d 
\end{array}
\]

We can easily see if the function $f(x,y)$ is independent of $y$, the result is independent from the selection of $q_1$ to $q_5$,  and therefore any selection will give the same result.

\subsection{Extension to complex valued functions of real variable}
\label{sec:extension}

One of the drawbacks of Multiplicative Calculus, generally put forward, is its restriction to positive valued functions of real variable. In order to overcome this restriction the theory of Multiplicative Calculus was extended to the complex domain. It is well-known from complex analysis that  the differentiation rules are a little bit more complicated for complex valued functions of complex variable as the Cauchy-Riemann conditions have to be satisfied. But, here we are only interested in complex valued functions of real variable, which simplifies the issue drastically, as the multiplicative counterparts of the Cauchy-Riemann conditions have not to be taken into account and the differentiation can be carried out independently for the real and the imaginary part. As illustrated in \cite{BR} the multiplicative derivative can be calculated everywhere except at the point $0+0i$ in the complex plain. So the 4th order multiplicative Runge-Kutta Method can be extended to negative valued functions as the phase factor is responsible for the change of the sign. The only problem that  could not be solved by extending Multiplicative Calculus to the complex domain is that the Multiplicative derivative is not defined at the roots of the function. So, a switch to Newtonian Calculus becomes inevitable at these points. In every step of the Multiplicative Runge-Kutta Method, we get the value of the function at this point and its multiplicative derivative at this point and use the ordinary Runge-Kutta Method for a couple of steps until the multiplicative derivative becomes again reasonably large and this values are then used as input of the Multiplicative Runge-Kutta method. The results are reasonably good, and often even better than using the ordinary Runge-Kutta method alone. 

If we assume that $f(x_{i-1})>0$, and $f(x_{i+1})<0$ and that the function is decreasing, then accordingly there must be a point $\xi \in [x_{i-1}, x_{i+1}]$ where $f(\xi)=0$. In this case the multiplicative derivative of $f(x)$ is not defined at $\xi$. Therefore, the Multiplicative Runge-Kutta method will be applied on the interval $[x_0, x_{i-1}]$, and on the interval $[x_{i+1},x_n]$. On the interval $[x_{i-1},x_{i+1}]$ we apply the ordinary Runge-Kutta Method, using the values $f(x_{i-1})$ and $f^*(x_{i-1})$ calculated by the Multiplicative Runge-Kutta Method as input for the ordinary Runge-Kutta Method, and vice versa for the point $x_{i+1}$.

\begin{figure}[H]
\centerline{\includegraphics[width=5cm]{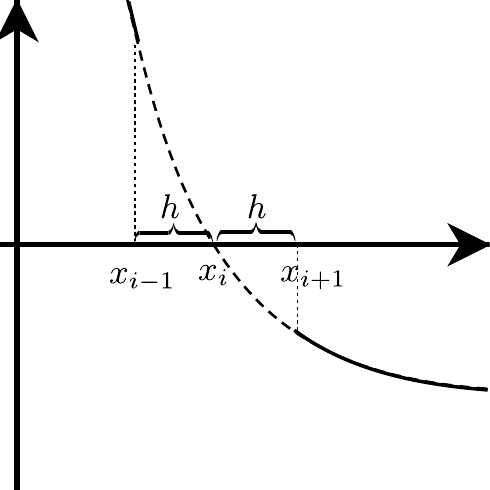}}
\caption{Bypass the roots where the multiplicative derivative becomes undefined. The dashed line denotes the region where the ordinary Runge-Kutta method is applied to prevent the multiplicative derivate to become infinite. The multiplicative Runge-Kutta method is applied in the region of the solid line.}
\end{figure}

The handover has been tested on several examples,working properly.


\section{Error Analysis}
\label{sec:error}
\subsection{Convergence Of One-Step Methods}
In this section we  examine the convergence behaviour of our one step method as $h\rightarrow0$ of an approximate solution $\eta(x;h)$. We assume that $f$ is one time $*$-differentiable on the interval $(a,b)$ and $y(x)$ denotes the exact solution of the initial-value problem

\begin{equation*}
y^*=f(x,y), \quad\quad y(x_0)=y_0.
\end{equation*} 

Let $\Phi(x,y;h)$ define a one-step method,

\begin{eqnarray*}
\eta_0&:=&y_0,\\
\quad \text{for} \quad i=0,1,\ldots :\\
\eta_{i+1}&:=&\eta_i\Phi(x_i,\eta_i;h)^h,\\
x_{i+1}&:=&x_i+h,
\end{eqnarray*}

which for $x\in R_h:=\{x_0+ih \, | \,i=0,1,2,\ldots\}$ produces the approximate solution $\eta(x;h)$:

\begin{equation*}
\eta(x;h):=\eta_i, \quad\quad \text{if} \quad x=x_0+i h.
\end{equation*}

Let  $x$ and $y$ be arbitrary, but fixed, and let $z(t)$ be the exact solution of the initial-value problem

\begin{equation}
z^*(t)=f(t,z(t)), \quad\quad z(x)=y,
\label{eq:z}
\end{equation}

with initial values $x$, $y$. Then the function

\begin{equation*}
\Delta(x,y;h):=
\begin{cases}
    \left(\frac{z(x+h)}{y}\right)^h  & \text{if } h\neq0,\\
     f(x,y)     & \text{if }  h=0 
\end{cases}
\end{equation*}

represents the multiplicative ratio function of the exact solution $z(t)$ of \eqref{eq:z} for step size $h$, while $\Phi(x,y;h)$ is the multiplicative ratio function for step size $h$ of the approximate solution of \eqref{eq:z} produced by $\Phi$. The  multiplicative ratio function is the multiplicative counterpart to the difference quotient in Newtonian calculus. 

The magnitude of the ratio

\begin{equation*}
\tau(x,y;h):=\frac{\Delta(x,y;h)}{\Phi(x,y;h)}
\end{equation*}

indicates how well the value $z(x+h)$ at $x+h$ obeys the equation of the one-step method.

One calls $\tau(x,y;h)$ the {\em{multiplicative  local discretization  error}} at the point $(x,y)$. For a reasonable one-step method one will require that

\begin{equation*}
\lim_{h\to0}\tau(x,y;h)=1
\end{equation*}

We are interested in the behaviour of the {\em{multiplicative global  discretization  error}}

\begin{equation*}
e(x;h):=\frac{\eta(x;h)}{y(x)}
\end{equation*}

for fixed $x$ and $h\rightarrow0$, $h\in H_x:=\left\{\frac{(x-x_0)}{n} \, | \, n=1,2,\ldots\right\}$. Since $e(x;h)$, like $\eta(x;h)$, is only defined for $h\in H_x$, we have to study the convergence of  

\begin{equation*}
e(x;h_n), \quad\quad h_n:=\frac{x-x_0}{n}, \quad\quad \text{as} \quad n\rightarrow\infty.
\end{equation*}

We say that the one-step method is $convergent$ if 
\begin{equation*}
\lim_{n\to \infty}e(x;h_n)=1
\end{equation*}

for all $x\in[a,b]$ and all functions $f$ being one time $*$-differentiable on the interval $(a,b)$.\\
For $f$ being $p$-times $*$-differentiable on $(a,b)$, methods of order $p>0$ are convergent, and  satisfy

\begin{equation*}
e(x;h_n)=O(e^{h_n^p}).
\end{equation*}

The order of the multiplicative global discretization error is thus equal to the order of the multiplicative local discretization error.  

\begin{lm}\label{eq:thm}

If the numbers $\xi_i$ satisfy estimates of the form 

\begin{equation*}
|\xi_{i+1}|\leq|\xi_i|^{(1+\delta)}B, \quad\quad  \delta>0, \quad\quad B\geq0, \quad\quad i=0,1,2,...,  
\end{equation*}

then 

\begin{equation*}
|\xi_n|\leq|\xi_0|^{e^{n\delta}}B^{\frac{e^{n\delta}-1 }{\delta}}
\end{equation*}

\end{lm}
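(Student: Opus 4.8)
The plan is to establish the bound by induction on $n$, in direct analogy with the classical additive recursion lemma used in the convergence analysis of ordinary one-step methods (e.g.\ in \cite{stbul93}), of which this is the multiplicative counterpart. For the base case $n=0$ the asserted inequality reads $|\xi_0|\le|\xi_0|^{e^{0}}B^{(e^{0}-1)/\delta}=|\xi_0|^{1}B^{0}=|\xi_0|$, which holds with equality. It therefore remains to carry out the inductive step.

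First I would assume the estimate $|\xi_i|\le|\xi_0|^{e^{i\delta}}B^{(e^{i\delta}-1)/\delta}$ and feed it into the one-step hypothesis $|\xi_{i+1}|\le|\xi_i|^{1+\delta}B$. Since $1+\delta>0$ and the quantities involved are nonnegative, raising the inductive bound to the power $1+\delta$ preserves the inequality, giving $|\xi_{i+1}|\le|\xi_0|^{(1+\delta)e^{i\delta}}\,B^{(1+\delta)(e^{i\delta}-1)/\delta+1}$. A short computation collapses the exponent of $B$ to $\bigl[(1+\delta)e^{i\delta}-1\bigr]/\delta$. The whole step then hinges on the elementary inequality $1+\delta\le e^{\delta}$, which upgrades $(1+\delta)e^{i\delta}$ to the larger quantity $e^{(i+1)\delta}=e^{\delta}e^{i\delta}$ simultaneously in the exponent of $|\xi_0|$ and in the exponent of $B$, thereby producing exactly the target bound $|\xi_{i+1}|\le|\xi_0|^{e^{(i+1)\delta}}B^{(e^{(i+1)\delta}-1)/\delta}$.

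The delicate point, and the step I expect to be the main obstacle, is that replacing an exponent by a larger one only increases the value when the base is at least $1$; this monotonicity of exponentiation is what the inequality $1+\delta\le e^{\delta}$ must be paired with. In the additive Stoer--Bulirsch lemma the analogous move is automatic because the quantities $|\xi_i|$ and $B$ are nonnegative and are merely multiplied by larger coefficients, whereas here the corresponding objects sit in the base of a power. I would therefore either record the natural hypothesis $|\xi_0|\ge1$ and $B\ge1$---which is precisely the regime relevant to multiplicative error analysis, where the neutral error is $1$ rather than $0$---or, more cleanly, reduce the whole statement to the additive lemma by setting $u_i:=\ln|\xi_i|$ and $\beta:=\ln B$. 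The recursion transforms into $u_{i+1}\le(1+\delta)u_i+\beta$, the classical conclusion $u_n\le e^{n\delta}u_0+\tfrac{e^{n\delta}-1}{\delta}\beta$ applies, and exponentiating recovers the claim; this also makes transparent exactly which sign condition (namely $u_0+\beta/\delta\ge0$, i.e.\ $|\xi_0|^{\delta}B\ge1$) the argument requires.
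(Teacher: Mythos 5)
Your induction is just the step-by-step form of the paper's own argument: the paper unrolls the recursion to $|\xi_n|\le|\xi_0|^{(1+\delta)^n}B^{((1+\delta)^n-1)/\delta}$ via the geometric sum and then applies $1+\delta\le e^{\delta}$ once at the end, whereas you apply the same inequality at every inductive step; the computational content is identical. The genuinely valuable part of your proposal is the caveat you flag, which the paper passes over in silence: replacing the exponent $(1+\delta)^n$ by the larger $e^{n\delta}$ enlarges the quantity only when the relevant base is at least $1$ --- more precisely, as your logarithmic reduction makes exact, if and only if $|\xi_0|^{\delta}B\ge1$. This is not pedantry: as stated the lemma is false, e.g.\ with $\delta=1$, $B=1$, $|\xi_0|=e^{-1}$ and equality in the recursion one gets $|\xi_n|=e^{-2^n}>e^{-e^{n}}$ for all $n\ge1$, violating the asserted bound. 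The paper's proof contains exactly the same unjustified final inequality; no damage occurs downstream only because the convergence theorem applies the lemma with $\xi_0=\widetilde{e}_0=1$ and $B=e^{N|h|^{p+1}}\ge1$, a regime in which your condition holds. Your alternative route via $u_i:=\ln|\xi_i|$, $\beta:=\ln B$, reducing to the classical additive lemma $u_{i+1}\le(1+\delta)u_i+\beta$ and exponentiating, is the cleaner repair (modulo the trivial degenerate cases $B=0$ or some $\xi_i=0$), since it surfaces the needed sign condition $u_0+\beta/\delta\ge0$ explicitly instead of leaving it hidden inside a monotonicity step; if the lemma is to stand alone, its hypotheses should be amended accordingly, most simply by requiring $|\xi_0|\ge1$ and $B\ge1$.
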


\begin{proof}

From the assumptions we get immediately

\begin{eqnarray*}
|\xi_1| &\leq& |\xi_0|^{(1+\delta)}B \\
|\xi_2| &\leq& |\xi_0|^{(1+\delta)^2}B^{1+(1+\delta)}\\
\vdots\\
|\xi_n| &\leq& |\xi_0|^{(1+\delta)^n}B^{[1+(1+\delta)+(1+\delta)^2+...+(1+\delta)^{n-1}]}\\
&=&|\xi_0|^{(1+\delta)^n}B^{\frac{(1+\delta)^n-1}{\delta}}\\
&\leq&|\xi_0|^{e^{n\delta}}B^{\frac{e^{n\delta}-1 }{\delta}}
\end{eqnarray*}

since $0<1+\delta\leq e^{\delta}$ for $\delta>-1$.

\end{proof}

\begin{thm}

Consider, for $x_0 \in [a,b]$, $y_0 \in \mathbb{R}$, the initial-value problem 

\begin{equation*}
y^*=f(x,y), \quad y(x_0)=y_0,
\end{equation*}

having the exact solution $y(x)$. Let the function $\Phi$ be continuous on 

\begin{equation*}
G:=\left\{(x,y,h)\, |\, a \leq x \leq b,\, \left|\frac{y}{y(x)}\right|\leq \gamma, \, 0 \leq |h| \leq h_0 \right\}, \, h_0>0, \, \gamma>1,
\end{equation*}

and there exist positive constants $M$ and $N$ such that 

\begin{equation*}
\left|\frac{\Phi(x,y_1;h)}{\Phi(x,y_2;h)}\right| \leq \left|\frac{y_1}{y_2}\right|^M
\end{equation*}

for all $(x,y_i,h) \in G$, $i=1,2,$ and 

\begin{equation*}
\left|\tau(x,y(x);h)\right| = \left|\frac{\Delta(x,y(x);h)}{\Phi(x,y(x);h)}\right| \leq e^{N|h|^p}, \quad p>0
\end{equation*}

for all $x\in[a,b],\, |h|\leq h_0$. Then there exists an $\overline{h}$, $0<\overline{h}\leq h_0$, such that for the multiplicative global discretization error $e(x;h)=\frac{\eta(x;h)}{y(x)}$,

\begin{equation*}
|e(x;h_n)| \leq e^{|h_n|^pN\frac{e^{M|x-x_0|}-1}{M}}
\end{equation*}

for all $x\in[a,b]$ and all $h_n=\frac{x-x_0}{n}$, $n=1,2,\dots$, with $|h_n|\leq \overline{h}$. If $\gamma=\infty$, then $\overline{h}=h_0$.

\end{thm}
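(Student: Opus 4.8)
The plan is to run the classical convergence argument for one-step methods multiplicatively, collapsing the step-to-step error growth onto Lemma~\ref{eq:thm}. Abbreviate $y_i:=y(x_i)$ and $e_i:=e(x_i;h)=\eta_i/y_i$, so that $e_0=\eta_0/y_0=1$. Because $y$ is itself the exact solution through each of its own points, $\Delta(x_i,y_i;h)$ is precisely the multiplicative ratio of the exact flow on $[x_i,x_{i+1}]$, giving $y_{i+1}=y_i\,\Delta(x_i,y_i;h)^{h}$, while the method reads $\eta_{i+1}=\eta_i\,\Phi(x_i,\eta_i;h)^{h}$. Dividing the two and inserting the reference value $\Phi(x_i,y_i;h)$ produces the error recursion
\begin{equation*}
e_{i+1}=e_i\left(\frac{\Phi(x_i,\eta_i;h)}{\Phi(x_i,y_i;h)}\right)^{h}\tau(x_i,y_i;h)^{-h},
\end{equation*}
since $\Phi(x_i,y_i;h)/\Delta(x_i,y_i;h)=\tau(x_i,y_i;h)^{-1}$.

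Next I would estimate the two $h$-th powers in magnitude. The Lipschitz-type hypothesis $|\Phi(x,y_1;h)/\Phi(x,y_2;h)|\le|y_1/y_2|^{M}$, applied with $y_1=\eta_i$, $y_2=y_i$, bounds the middle factor by $|e_i|^{M|h|}$, and the local-discretization-error hypothesis bounds the last factor $|\tau(x_i,y_i;h)^{-h}|$ by $\exp(N|h|^{p+1})$. Hence
\begin{equation*}
|e_{i+1}|\le|e_i|^{\,1+M|h|}\,\exp\!\left(N|h|^{p+1}\right),
\end{equation*}
which is exactly the hypothesis of Lemma~\ref{eq:thm} with $\xi_i=e_i$, $\delta=M|h|>0$, and $B=\exp(N|h|^{p+1})\ge0$. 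Applying the Lemma, using $e_0=1$ and $n\delta=nM|h_n|=M|x-x_0|$ (because $n|h_n|=|x-x_0|$), the factor $|e_0|^{e^{n\delta}}$ drops out and
\begin{equation*}
|e(x;h_n)|\le B^{\frac{e^{n\delta}-1}{\delta}}=\exp\!\left(N|h_n|^{p+1}\,\frac{e^{M|x-x_0|}-1}{M|h_n|}\right)=\exp\!\left(N|h_n|^{p}\,\frac{e^{M|x-x_0|}-1}{M}\right),
\end{equation*}
which is the asserted estimate; the collapse $|h_n|^{p+1}/|h_n|\to|h_n|^{p}$ is precisely how the order-$p$ local error produces an order-$p$ global error.

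The step I expect to be the genuine obstacle is legitimising the use of the Lipschitz hypothesis at every index, i.e. guaranteeing $(x_i,\eta_i,h)\in G$, equivalently $|e_i|\le\gamma$, throughout the run; only then is the displayed recursion valid at each step, so that the argument is really an induction rather than a closed computation. To close it I would observe that the bound just derived admits the step-independent majorant $C(\overline{h}):=\exp\!\big(N\overline{h}^{\,p}(e^{M(b-a)}-1)/M\big)$ for all $|h|\le\overline{h}$, with $C(\overline{h})\to1$ as $\overline{h}\to0$; since $\gamma>1$, one can fix $\overline{h}\in(0,h_0]$ so small that $C(\overline{h})\le\gamma$. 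Induction on $i$ then works: $|e_0|=1\le\gamma$, and if $|e_j|\le\gamma$ for all $j\le i$ the recursion holds up to step $i$, so Lemma~\ref{eq:thm} gives $|e_{i+1}|\le C(\overline{h})\le\gamma$, keeping the next iterate inside $G$. Continuity of $\Phi$ on $G$ is what makes the iterates well defined and the constants $M$, $N$ usable uniformly along the trajectory. Finally, when $\gamma=\infty$ the constraint $|e_i|\le\gamma$ is vacuous, no shrinking of the step is needed, and one may take $\overline{h}=h_0$.
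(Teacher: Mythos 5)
Your proposal is correct, and its computational core coincides with the paper's: you derive the same error recursion $e_{i+1}=e_i\left(\Phi(x_i,\eta_i;h)/\Phi(x_i,y_i;h)\right)^h\left(\Phi(x_i,y_i;h)/\Delta(x_i,y_i;h)\right)^h$ (the paper's equation \eqref{eq:e}), apply the same two hypotheses to get $|e_{i+1}|\le|e_i|^{1+M|h|}e^{N|h|^{p+1}}$, and collapse onto Lemma~\ref{eq:thm} with $\delta=M|h|$, $B=e^{N|h|^{p+1}}$, $e_0=1$, with the same bookkeeping $n|h_n|=|x-x_0|$. Where you genuinely diverge is in legitimising the recursion, i.e.\ guaranteeing $(x_i,\eta_i,h)\in G$ at every step — the obstacle you correctly singled out. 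You close it by a bootstrap induction: fix $\overline{h}$ so that the step-uniform majorant $C(\overline{h})=\exp\left(N\overline{h}^{\,p}(e^{M(b-a)}-1)/M\right)\le\gamma$, and propagate $|e_i|\le\gamma$ inductively. The paper instead uses the classical Stoer--Bulirsch device: it introduces a clamped extension $\widetilde{\Phi}$ agreeing with $\Phi$ on $G$ and frozen at $y(x)\gamma$ resp.\ $y(x)/\gamma$ outside the tube, proves the bound unconditionally for the modified method on all of $[a,b]$ with $|h|\le h_0$, and only then chooses $\overline{h}$ so small that the bound forces $|\widetilde{e}_i|\le\gamma$, whence $\widetilde{\eta}_i=\eta_i$ and the estimate transfers to the original method. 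The two devices buy the same theorem; yours is more direct and avoids the auxiliary method, while the paper's clamping cleanly separates the quantitative estimate (valid for all $|h|\le h_0$) from the domain question, which enters only at the final transfer step. One caveat you share with the paper: the recursion requires an upper bound on $|\Phi(x_i,y_i;h)/\Delta(x_i,y_i;h)|$, while the stated hypothesis bounds the reciprocal $|\Delta/\Phi|$; both you and the paper silently invert, which is legitimate only if the consistency hypothesis is read as the two-sided bound $\left|\ln|\tau(x,y(x);h)|\right|\le N|h|^p$, as intended in the multiplicative translation of the additive condition $|\tau|\le N|h|^p$. Your treatment of the $\gamma=\infty$ case matches the paper's.
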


\begin{proof}

The function

\begin{equation*}
\widetilde{\Phi}(x,y;h)=
\begin{cases}
    \Phi(x,y;h)  & \text{if } (x,y;h) \in G\\
    \Phi(x,y(x)\gamma;h)     & \text{if }  x\in[a,b], \, |h|\leq h_0, \, y\geq y(x)\gamma \\
    \Phi(x,\frac{y(x)}{\gamma};h)      & \text{if }  x\in[a,b], \, |h|\leq h_0, \, y\leq \frac{y(x)}{\gamma} 
\end{cases}
\end{equation*}

is evidently continuous on $\widetilde{G}:=\{(x,y,h)\, | \, x\in[a,b], \, y\,\in\mathbb{R}, \, |h|\geq h_0 \}$ and satisfies the condition

\begin{equation}
\left|\frac{\widetilde{\Phi}(x,y_1;h)}{\widetilde{\Phi}(x,y_2;h)}\right| \leq \left|\frac{y_1}{y_2}\right|^M
\label{eq:M}
\end{equation}

for all $(x,y_i,h)\in\widetilde{G}, i=1,2$, and  because of $\widetilde{\Phi}(x,y(x);h)=\Phi(x,y(x);h)$, also the condition

\begin{equation}
\left|\frac{\Delta(x,y(x);h)}{\widetilde{\Phi}(x,y(x);h)}\right| \leq e^{N|h|^p}, \quad \text{for}\,\, x\in[a,b], \, |h|\leq h_0.
\label{eq:N}
\end{equation}
is satisfied.Let the one-step method generated by $\widetilde{\Phi}$ furnish the approximate values $\widetilde{\eta_i}:=\widetilde{\eta}(x_i;h)$ for $y_i:=y(x_i)$, $x_i:=x_0+ih$:

\begin{equation*}
\widetilde{\eta}_{i+1}=\widetilde{\eta_i} \cdot \widetilde{\Phi}(x_i,\widetilde{\eta_i};h)^h.
\end{equation*}

In view of

\begin{equation*}
y_{i+1}=y_i \cdot \Delta(x_i,y_i;h)^h,
\end{equation*}

one obtains for the error $\widetilde{e_i}:=\frac{\widetilde{\eta_i}}{y_i}$, the recurrence formula

\begin{equation}
\widetilde{e}_{i+1}=\widetilde{e}_i \cdot \left[\frac{\widetilde{\Phi}(x_i,\widetilde{\eta}_i;h)}{\widetilde{\Phi}(x_i,y_i;h)}\right]^h \cdot \left[\frac{\widetilde{\Phi}(x_i,y_i;h)}{\Delta(x_i,y_i;h)}\right]^h 
\label{eq:e}
\end{equation}

Now from \eqref{eq:M}, \eqref{eq:N} it follows that

\begin{eqnarray*}
\left|\frac{\widetilde{\Phi}(x_i,\widetilde{\eta}_i;h)}{\widetilde{\Phi}(x_i,y_i;h)}\right| &\leq& \left|\frac{\widetilde{\eta}_i}{y_i}\right|^M = |\widetilde{e}_i|^M,\\
\left|\frac{\widetilde{\Phi}(x_i,y_i;h)}{\Delta(x_i,y_i;h)}\right| &\leq& e^{N|h|^p}, 
\end{eqnarray*}

and hence from \eqref{eq:e} we get the recursive estimate 

\begin{equation*}
|\widetilde{e}_{i+1}| \leq |\widetilde{e}_i|^{(1+|h|M)}e^{N|h|^{p+1}}.
\end{equation*}

As we are dealing with an initial value problem, the initial values have to be considered exact, and therefore $\widetilde{e}_0=\frac{\widetilde{\eta}_0}{y_0}=1$, resulting in

\begin{equation}
|\widetilde{e}_k| \leq e^{N|h|^p\frac{e^{k|h|M}-1}{M}}.
\label{eq:ef}
\end{equation}

Now let $x\in[a,b]$, $x\neq x_0$, be fixed and $h:=h_n=\frac{(x-x_0)}{n}$, $n>0$ an integer. Then $x_n=x_0+nh=x$ and from \eqref{eq:ef} with $k=n$, since $\widetilde{e}(x;h_n)=\widetilde{e}_n$ it follows at once that 

\begin{equation*}
|\widetilde{e}(x;h_n)| \leq e^{N|h_n|^p\frac{e^{M|x-x_0|}-1}{M}}
\end{equation*}

for all $x\in[a,b]$ and $h_n$ with $|h_n|\leq h_0$. Since $|x-x_0|\leq|b-a|$ and $\gamma>0$, there exists an $\overline{h}$, $0<\overline{h}\leq h_0$, such that $|\widetilde{e}(x,;h_n)|\leq \gamma$ for all $x\in[a,b]$, $|h_n|\leq \overline{h}$, i.e., for the one-step method generated by $\Phi$, 
\begin{eqnarray*}
\eta_0&=&y_0, \\
\eta_{i+1}&=&\eta_i\Phi(x_i,\eta_i;h),
\end{eqnarray*}

we have for $|h|\leq \overline{h}$, according to the definition of $\widetilde{\Phi}$,

\begin{equation*}
\widetilde{\eta}_i=\eta_i, \quad\quad \widetilde{e}_i=e_i, \quad\quad and \quad\quad \widetilde{\Phi}(x_i,\widetilde{\eta}_i;h)=\Phi(x_i,\eta_i;h).
\end{equation*}

The assertion of the theorem,

\begin{equation*}
|\widetilde{e}(x;h_n)| \leq e^{N|h_n|^p\frac{e^{M|x-x_0|}-1}{M}}
\end{equation*}

thus follows for all $x\in[a,b]$ and all $h_n=\frac{(x-x_0)}{n}$, $n=1,2,\dots,$ with $|h_n|\leq\overline{h}$.

\end{proof}

\section{Examples for the Multiplicative Runge Kutta Method}
\label{sec:ex}

\subsection{Solution of first order multiplicative differential equations}
\label{subsec:exs}	
 
\subsubsection{Square Root Example}
\label{subsec:exsqrt}

As the first example we want to discuss the following multiplicative initial value problem, where no exponential function or logarithm is involved.

\begin{equation}
y^*(x) = e^{\frac{1}{2y^2}}, \qquad y(0)=1,
\label{eq:msqrt}
\end{equation}

where the corresponding Newtonian initial value problem becomes

\begin{equation}
y'(x) = \frac{1}{2y}, \qquad y(0)=1.
\label{eq:nsqrt}
\end{equation}

The general solution of these two initial value problems  \eqref{eq:msqrt} and \eqref{eq:nsqrt} is

\begin{equation}
y(x) =\sqrt{x+1}.
\label{eq:sqrt}
\end{equation}

Application of the 4th order MRK - method and the 4th order RK - method gives the results summarized in the following table.

\begin{table}[H]
\[
\begin{array}{l l l l l l}
x &y_{exact}  &y_{MRK}   & \text{relative}& y_{NEWT} & \text{relative} \\
  &                 &              & err_{MRK} \text{in \% }    &              &  err_{NEWT} \text{in \% } \\
\hline
0   & 1         & 1         & 0              & 1         & 0\\
0.6 & 1.2649111 & 1.2649153 & 3.38\times 10^{-6} & 1.2382302 & 0.021093074\\
1.2 & 1.4832397 & 1.483244  & 2.88\times 10^{-6} & 1.4409643 & 0.028502049\\
1.8 & 1.6733201 & 1.673324  & 2.36\times 10^{-6} & 1.6205072 & 0.031561693\\
2.4 & 1.8439089 & 1.8439125 & 1.97\times 10^{-6} & 1.783364  & 0.032835088\\
3   & 2         & 2.0000034 & 1.69\times 10^{-6} & 1.9334697 & 0.033265139 \\
\hline
\end{array}
\]
\caption{Comparison of the Multiplicative Runge-Kutta Method and Ordinary Runge-Kutta Method }
\label{tab:m-n-sqrt}
\end{table} 

Table \ref{tab:m-n-sqrt} shows that the relative error is 4 orders greater in magnitude in the case of  the RK4-method compared to the MRK4-method.  This is in well agreement to the error analysis presented in section \ref{sec:error}. On the other hand, we know that the basic  operations used in multiplicative calculus are multiplication, division, calculation of the exponential function and calculation of the logarithm function, whereas in the Newtonian case, we only have to consider multiplication, summation, and subtraction. 

Let all numbers be of size $n$-bit. The computational complexities for the following arithmetic operations are:

\begin{center}
\begin{tabular}{ll}
addition and subtraction & $O(n)$ \\
multiplication and division  & $O(n^2)$ \\
exponential and logarithm & $O(n^{5/2})$ 
\end{tabular}
\end{center}

So, evidently the number of operations must be significantly smaller for the MRK4 compared to RK4. In order to consider the MRK4 as a serious alternative to RK4, the performance of MRK4 has to be at least comparable. Performance means, higher accuracy, i.e. smaller errors, for the same computation time. Therefore, the relative error as a function of the  computation time has been measured by keeping the starting and end point fixed and varying the step size $h$. The results for both methods have been compared in figure \ref{fig3}.

\begin{figure}[H]

\centerline{\includegraphics[width=10cm]{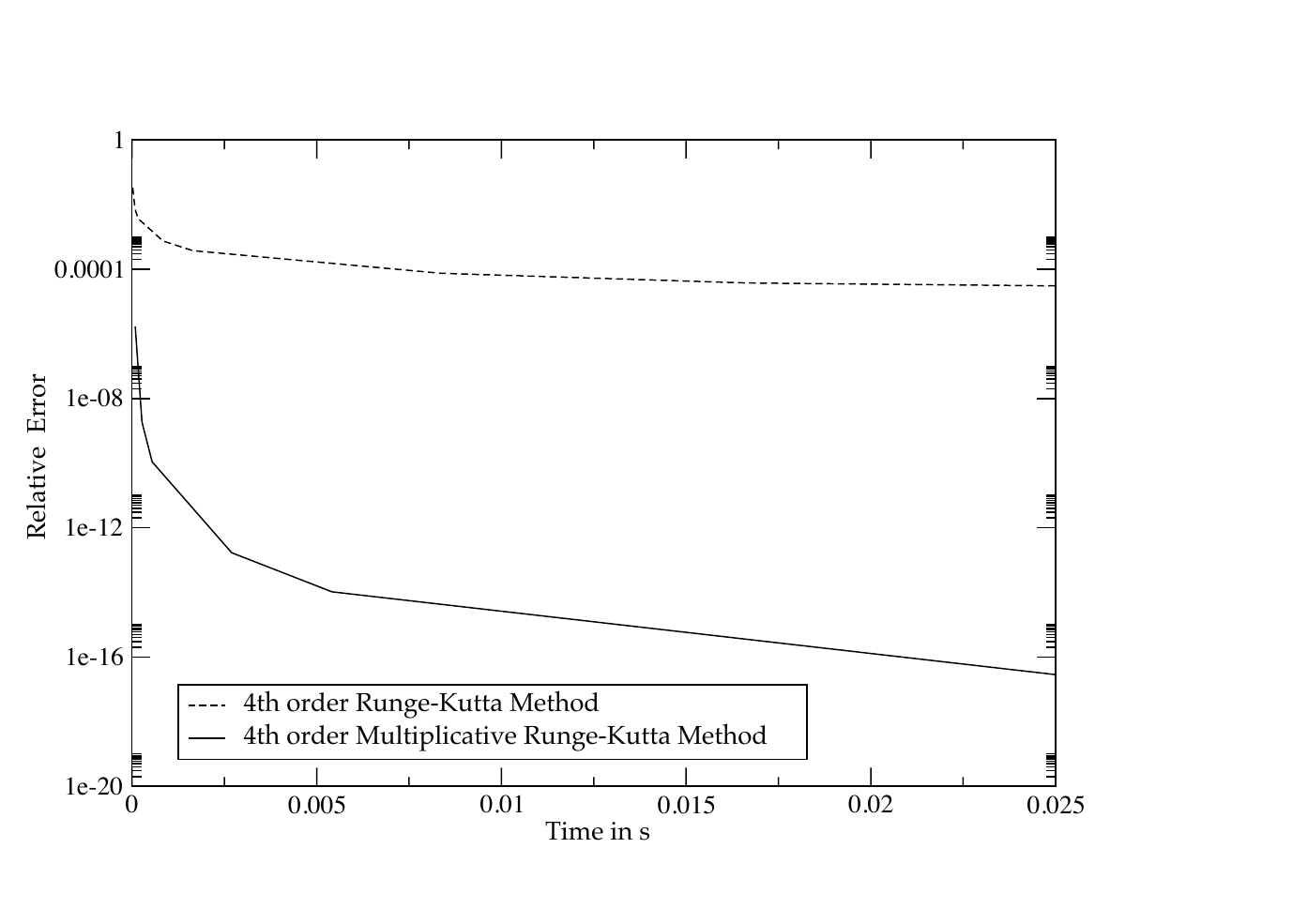}}
\caption{Comparison of the computation time and the relative error for the multiplicative initial value problem \eqref{eq:msqrt} and the initial value problem \eqref{eq:nsqrt} for the same initial value $x_0=0$ and $y_0=1$ and fixed final values $x_n=3$, $y_n=2$ by varying $h$.}
\label{fig3}
\end{figure}

The comparison of the relative errors as function of the computation time shows that the MRK4 method is working more efficiently compared to the RK4 method, showing a significant difference between the results. This comparison has been carried out also for other sample problems with known closed form solutions. The results show that the MRK4 method is more efficient compared to the RK4. 

\subsubsection{Biological Example}
\label{subsec:exmrk}	

In order to show that the proposed method is also applicable to get better results for mathematical models in biology,  we want to discuss the bacterial growth in food modelled by Huang \cite{huang08,huang10,huang12}.

In the Baranyi model \cite{baranyi94, baranyi95} for the bacterial growth in food described by the differential equation.

\begin{equation}
y'(t) = \mu_{max}\frac{1-e^{y-y_{max}} }{1+e^{-\alpha (t-\lambda)}} 
\label{eq:bakn}
\end{equation}

The multiplicative counter part of  equation \eqref{eq:bakn} is:

\begin{equation}
y^*(t) = \exp \left\{ \frac{\mu_{max}}{y}\,\frac{1-e^{y-y_{max}} }{1+e^{-\alpha (t-\lambda)}} \right\}
\label{eq:bakm}
\end{equation}

with the initial value $y_0=y(0)=7$.  As there is no closed form solution available for these initial value problems, we have solved both initial value problems using the MRK4 and the RK4 for small $h$, where both solutions coincide. Then we increased the step size $h$ and checked which method deviates first from the solutions for small $h$. As depicted in figure \ref{fig1}, the RK4 method deviates first from the accurate solution. We compared then the greatest $h$, where MRK4 still coincides with the solutions for small $h$, and the RK4 method for this $h$. Also in this case, the MRK4 method is giving better performance results compared to RK4. 

\begin{figure}[H]
\centerline{\includegraphics[width=8cm]{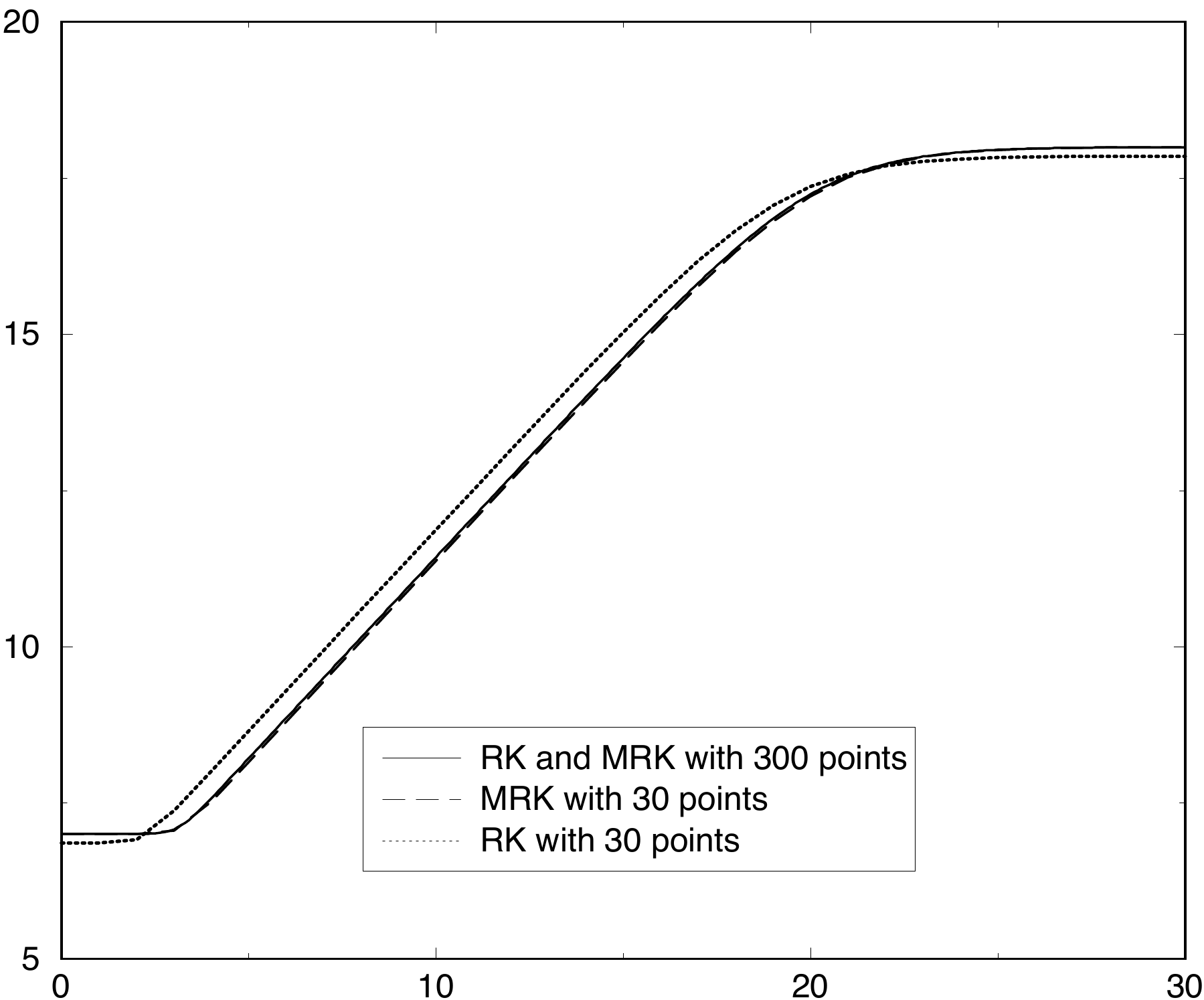}}
\caption{Solution for Bacteria growth model, $\lambda=3.21$, $\mu_{max}= 0.644$, $\alpha=4$, $y_{max}=18$.}
\label{fig1}
\end{figure}
The numerical solutions of the differential equations \eqref{eq:bakn} and \eqref{eq:bakm} using the corresponding Runge-Kutta Methods are not distinguishable for $h=0.1$. But, as depicted in figure \ref{fig1}, the MRK4 - method for $h=1$ still coincides with the solution for $h=0.1$, whereas the RK4 is significantly different (dotted line).

\subsection{Solution of a second order multiplicative differential equation}
\label{sec:cmrk}

As an example for higher order multiplicative initial value problems we will consider the following second order initial value problem
\begin{equation}
y^{**}(x)=f(x,y,y^*), \quad y(x_0) =y_0,\quad \text{ and }
y^*(x_0)= y_1.
\end{equation}

This initial value problem can be solved by solving the coupled system of first order multiplicative differential equations
\begin{eqnarray}
y_0^*(x) &=& y_1(x)\\
y_1^*(x)&=& f(x,y_0,y_1) . 
\end{eqnarray}

Exemplarily we want to solve the initial value problem for the 2nd order multiplicative differential equation
\begin{equation}
y^{**}(x) =e. 
\label{eq:f1}
\end{equation}
The corresponding ordinary second order differential equation is 
\begin{equation}
y''(x) = \frac{y'(x)^2}{y(x)}+ y(x).
\label{eq:f2} 
\end{equation}
The general solution of the differential equations \eqref{eq:f1} and  \eqref{eq:f2} is
\begin{equation}
y(x) =\alpha \exp \left\{\frac{x^2}{2}+\beta x\right\}. 
\end{equation}
This initial value problem, was solved also as an example for a multiplicative boundary value problem in \cite{ROK}. 
In order to be able to compare the results with the multiplicative finite difference methods solution, discussed in \cite{ROK}, we select $\alpha=1$,$\beta=1$, $x_0=1$, and $h=0.25$, resulting in the initial conditions
\begin{equation}
y_0=e^{3/2} \quad \text{ and } y_1=e^{2},
\label{eq:f1ic}
\end{equation}
and compare the results in the following table.

\begin{table}[H]
\[
\begin{array}{l l l l l l}
x &y_{exact}  &y_{MRK}   & \text{relative}& y_{MFD} & \text{relative} \\
  &                 &              & err_{MRK} \text{in \% }    &              &  err_{MFD} \text{in \% } \\
\hline
1 & 4.48168907 & 4.481689070  & 0 & 4.48168907 & 0 \\
1.25 & 7.62360992 & 7.62360992 & 9.3 \times 10^{-15} & 7.62360991 & 3.5 \times 10^{-13} \\
1.5 & 13.80457419 & 13.80457419 & 1.3\times 10^{-14} & 13.80457418 & 5.3 \times 10^{-13} \\ 
1.75 & 26.60901319 & 26.60901319  & 1.7\times 10^{-14} & 26.60913187 & 1.8 \times 10^{-13}\\
\hline 
\end{array}
\]

\caption{ comparison of the Multiplicative Runge-Kutta Method and Multiplicative Finite Difference Method }
\label{tab:mfd}
\end{table}

Table \ref{tab:mfd} shows  the numerical  approximation using the MRK4 for \eqref{eq:f1} with the initial conditions \eqref{eq:f1ic} and the corresponding results for the multiplicative finite difference method from  \cite{ROK}. In this case we can see that the MRK4 is slightly better than the Multiplicative Finite Difference method by one order of magnitude in the relative error.  On the other hand, if we solve the corresponding ordinary differential equation \eqref{eq:f2} with the corresponding initial values 

\begin{equation}
y_0=e^{3/2} \quad \text{ and } y_1=2 e^{3/2}
\label{eq:f2ic}
\end{equation}
we get the results as shown in table \ref{tab:mrk} below.

\begin{table}[H]
\[
\begin{array}{l l l l l l}
x &y_{exact}  &y_{MRK}   & \text{relative}& y_{newt} & \text{relative} \\
  &                 &              & err_{MRK} \text{in \% }    &              &  err_{newt} \text{in \% } \\
\hline
1 & 4.48168907 & 4.481689070  & 0 & 4.48168907 & 0 \\
1.25 & 7.62360992 & 7.62360992 & 9.3 \times 10^{-15} & 7.61823131 & 7.1 \times 10^{-2} \\
1.5 & 13.80457419 & 13.80457419 & 1.3\times 10^{-14} & 13.77941017 & 1.8 \times 10^{-1} \\ 
1.75 & 26.60901319 & 26.60901319  & 1.7\times 10^{-14} & 26.51619718 & 3.5 \times 10^{-1} \\
\hline
\end{array}
\]

\caption{ comparison of the Multiplicative Runge-Kutta Method and the Runge-Kutta Method }
\label{tab:mrk}
\end{table} 

Obviously,  the RK4 fails drastically in this case, as the relative error differs by 13 orders in magnitude compared to its multiplicative counterpart.  The MRK4, as well as the multiplicative finite difference method succeed to produce proper results. Also in this case the performance of the MRK4 method is significantly better compared to the RK4.

\section{Conclusion}
\label{sec:con}

After a short motivation of the problem in the introduction, we described the multiplicative Runge-Kutta method for the solution of multiplicative initial value problems  of the form 
\[
y^*(x)= f(x,y), \text{ with } y(x_0)=y_0,
\]
where $x_0$ is the starting point and $y_0$ the initial value. The derivation of the 2nd order multiplicative Runge-Kutta method was carried out explicitly in detail. For the higher order methods the ansatzes, the solutions, as well as the corresponding Butcher tableaus are presented. The most successful methods to overcome the restrictions of Multiplicative Calculus are presented in section \ref{sec:extension}. These methods ensure that the Multiplicative Runge-Kutta method becomes a universally applicable tool. The error analysis and the convergence of multiplicative one-step methods was discussed in detail in section \ref{sec:error}. Finally the Multiplicative Runge-Kutta method is applied to several problems, and the results are compared with the results from the ordinary Runge-Kutta method and the Multiplicative Finite Difference Method. We could observe, that in these examples the Multiplicative Runge-Kutta method produces significantly better results for the same step width compared to the ordinary Runge-Kutta method. Furthermore, the performance of both methods was compared for one example explicitly. We observed that the Multiplicative Runge-Kutta method produced smaller errors for the same computation time compared to the ordinary Runge-Kutta method, demonstrating the universal applicability of the proposed method. The Multiplicative Runge-Kutta method was also applied to the solution of a bacterial growth model proposed by Baranyi and compared to the ordinary Runge-Kutta method, resembling the previous results.

\bibliography{multiplicative_calculus}
\bibliographystyle{plain}

\end{document}